\theoremstyle{definition}
\theoremstyle{definition}
\theoremstyle{definition}
\theoremstyle{definition}\usepackage{amsmath}
\theoremstyle{plain}
\newtheorem{thm}{Theorem}
\theoremstyle{plain}
\newtheorem{prop}{Proposition}
\theoremstyle{plain}
\theoremstyle{plain}
\newtheorem{lem}{Lemma}
\newtheorem{coro}{Corollary}
\newcommand{\Z}{\mathbb{Z}}
\newcommand{\Q}{\mathbb{Q}}
\newcommand{\C}{\mathbb{C}}
\newcommand{\Ok}{\mathcal{O}_K}
\newcommand{\Tr}{\operatorname{Tr}}
\newcommand{\N}{\mathbb{N}}
\newcommand{\pp}{\mathfrak{p}}
\newcommand{\Dk}{\mathcal{D}_K}
\address{Dipartimento di Matematica\\
         Università di Milano\\
         via Saldini 50\\
         20133 Milano\\
         Italy}
\email{francesco.battistoni@unimi.it}
\title[Discriminants of number fields and surjectivity of trace homomorphism]{Discriminants of number fields and surjectivity of trace homomorphism on rings of integers}
\author[F. Battistoni]{Francesco Battistoni}
\begin{document}

\maketitle
\begin{abstract}
In this note we give a brief survey of the most elementary criteria used to determine the surjectivity of the trace operator on the ring of integers of a number field $K$. Furthermore, we introduce an easy to state yet unknown surjectivity criterion depending only on the prime factorization of the degree $n$ of $K$ and on the squarefree part of the discriminant $d_K$.
\end{abstract}

\section{Preliminaries and trace homomorphism}
Let $K$ be a number field of degree $n\in\N$ over the field $\Q$ of rational numbers. It is known that, for $n> 1$, there is not a canonical way to embed $K$ in the field $\C$ of complex numbers; nonetheless, the field $K$ admits exactly $n$ embeddings $\sigma_1,\ldots,\sigma_n: K\rightarrow\C$.\\
By the Primitive Element Theorem (Theorem 5.1 of \cite{milneFT}) we know that any number field $K$ has the form $\Q(\alpha)$ for some algebraic number $\alpha\in K$, and if $p(x)\in\Z[x]$ is the minimum polynomial of $\alpha$, then there is a bijection between the embeddings $\sigma_1,\ldots,\sigma_n$ of $K$ and the complex roots of $p(x)$. 
\\\\
Given $\beta\in K$, define its \textbf{trace} as the number $\Tr(\beta):= \sum_{i=1}^n \sigma_i(\beta).$ By its very definition, the trace is an algebraic number which is invariant for the action of the $n$ embeddings of $K$, and thus it is a rational number. This allows to define the trace function
$$\Tr: K\rightarrow\Q$$
which is immediately seen to be an homomorphism of $\Q$-vector spaces.
\\\\
If $K=\Q(\alpha)$ and $p(x):= x^n +a_1 x^{n-1} + \cdots +a_{n-1}x +a_n\in\Q[x]$ is the minimum polynomial of $\alpha$, then $\Tr(\alpha) = -a_1$; this follows immediately from the fact that $p(x)$ splits as $\prod_{i=1}^n (x-\sigma_i(\alpha))$ in any algebraic closure of $K$ (Corollary 3.12 of \cite{jarvisAlgebraic}).\\\\
Let $\Ok$ be the ring of integers of $K$, i.e. the subring of the algebraic integers contained in $K$. If $\alpha\in\Ok$, then not only $\Tr(\alpha)$ is a rational number but it is also an algebraic integer, and so $\Tr(\alpha)\in\Z$. The restricted map 
$$\Tr: \Ok\rightarrow \Z$$
is an homomorphism of abelian groups.
\\\\
The ring of integers $\Ok$ satisfies the following two important properties:
\begin{itemize}
    \item Any non-zero ideal $I\subset\Ok$ can be written in a unique way as a finite product of prime ideals of $\Ok$ (Theorem 3.14, Chapter I of \cite{janusz});
    \item If $K$ has degree $n$, then $(\Ok, +)$ is a free abelian group of rank $n$ (Theorem 1, Chapter I of \cite{lang2013algebraic}).
\end{itemize}
\section{Surjectivity of trace operator}\label{SectionSurjectivity}
Given a number field $K$ of degree $n$, it is very easy to see that the trace map is a surjective homomorphism: in fact, $\Tr(1) = n$ and so, given $a/b\in\Q$, the element $a/(nb)$ is such that $\Tr(a/(nb)) = a/(nb)\cdot\Tr(1) = a/b$.\\
Actually, this proves that considering the subfield $\Q\subset K$ is enough to yield a surjection.
\\\\
Does this surjectivity hold also for the restricted map $\Tr:\Ok\rightarrow\Z$? Surely the trick of dividing by the degree $n$ of the number field no longer works, because given $\alpha\in\Ok$ the element $\alpha/n$ may not be in $\Ok$.\\
In fact, it is very easy to provide an example of number field for which the trace restricted to the ring of integers is not surjective: consider the field $K=\Q(\sqrt{2})$, which has minimum polynomial $p(x):= x^2-2$. The ring of integers $\Ok$ is then equal to $\Z[\sqrt{2}]$ (Propostions 1.32 and 1.33, Chapter II of \cite{frohlichtaylor}), i.e. any algebraic integer in $K$ has the form $a+b\sqrt{2}$ with $a,b\in\Z$. Being $\Tr(m)=2m$ for any $m\in\Z$ and $\Tr(\sqrt{2})=0$ because of $p(x)$, then the trace of any element of $\Ok$ is an even rational integer, and so the restricted trace map is not surjective.\\
The above considerations imply that the restricted trace is not surjective for any quadratic number field $\Q(\sqrt{d})$ with $d\in\Z$ squarefree and $d\equiv 2,3$ mod $4$ (this last assumption is needed to ensure that the ring of integers is equal to $\Z[\sqrt{d}]$).
\\\\
One could wonder if there exist any criteria, different from explicitly studying the trace map, to determine whether the restricted trace homomorphism is surjective.\\
A first try comes from looking at the minimum polynomial of the number field.
\begin{prop}
Let $K$ be a number field with minimum polynomial $p(x):= x^n +a_1 x^{n-1} + \cdots +a_{n-1}x +a_n\in\Z[x].$ If $a_1=\pm 1$, then the trace map $\Tr:\Ok\rightarrow\Z$ is surjective.
\end{prop}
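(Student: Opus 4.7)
The plan is to exploit the explicit formula already recorded in the excerpt, namely that for $K = \Q(\alpha)$ with minimum polynomial $p(x) = x^n + a_1 x^{n-1} + \cdots + a_n \in \Z[x]$ one has $\Tr(\alpha) = -a_1$. Since $p(x)$ is monic with integer coefficients, $\alpha$ is an algebraic integer, so $\alpha \in \Ok$ and therefore $-a_1 = \Tr(\alpha)$ genuinely lies in the image of the restricted trace map $\Tr \colon \Ok \to \Z$.

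From here the argument is essentially one line. The image $\Imm(\Tr)$ is a subgroup of $(\Z,+)$, and every subgroup of $\Z$ has the form $m\Z$ for some $m \in \N$. Under the hypothesis $a_1 = \pm 1$ the image contains $\mp 1$, which forces $m = 1$, i.e.\ $\Imm(\Tr) = \Z$.

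There is no real obstacle: the only content is the observation that $\alpha$ itself, not merely some $\Q$-linear combination, already lies in $\Ok$ and has trace equal to a unit of $\Z$. I would therefore present the proof in two short steps, the first invoking the formula $\Tr(\alpha) = -a_1$ cited earlier (from Corollary 3.12 of \cite{jarvisAlgebraic}), and the second using the classification of subgroups of $\Z$ to upgrade the presence of $\pm 1$ in the image to full surjectivity.
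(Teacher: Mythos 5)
Your proof is correct and follows essentially the same route as the paper: both produce the algebraic integer $\alpha\in\Ok$ with $\Tr(\alpha)=-a_1=\mp 1$ and then upgrade this to surjectivity, the paper by writing $m=\Tr(\pm m\alpha)$ explicitly and you by noting the image is a subgroup of $\Z$ containing a unit. No gaps; the difference is purely cosmetic.
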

\begin{proof}
$p(x)$ being a monic irreducible polynomial with integer coefficients, there exists $\alpha\in\Ok$ root of $p(x)$ such that $\Tr(\alpha)= -a_1=\mp 1$. Then, for every $m\in\Z$ it is $m = \Tr(m\alpha) $ or $m=\Tr(-m\alpha)$ depending on the sign of $\Tr(\alpha)$.
\end{proof}
\noindent
What can be said for number fields $K$ which are defined by polynomials with coefficient $a_1\neq \pm 1$ and such that it seems not possible to produce elements $\alpha\in \Ok$ with $\Tr(\alpha)=\pm 1$ by hands only? One can get further information thanks to the concept of ramification, which is naturally related to the trace homomorphism: this is the subject of the next section.

\section{Discriminants and ramification}
Let $K$ be a number field and let $\Ok$ be its ring of integers. 
Given a prime number $p\in\Z$, the ideal $p\Ok$ is not necessarily prime but has a factorization
$$p\Ok = \pp_1^{e_1}\cdots\pp_r^{e_r}$$
where the $\pp_i$'s are prime ideals in $\Ok$ and $e_i\in\N$ for every $i=1,\ldots,r$. The prime number $p$ is said to be \textbf{ramified in $K$} if $e_i>1$ for some index $i$.\\\\
It is a classical problem in Number Theory to detect the prime numbers ramifying in a number field $K$: its solution depends mainly on the folllowing concepts.\\
Let $\alpha_1\ldots,\alpha_n\in\Ok$ be independent $\Z$-generators of $\Ok$ as abelian group. The \textbf{discriminant of $K$} is defined as
$$d_K:= ( \det (\sigma_i(\alpha_j))_{i,j=1}^n )^2 = \det (\Tr(\alpha_i\alpha_j))_{i,j=1}^n.$$
One gets $d_K\in\Z$ because of the last equality, and it is obvious from the definition that the value of $d_K$ does not change by considering a new system $\beta_1,\ldots,\beta_n$ of $\Z$-independent generators for $\Ok$.\\ The importance of the discriminant for the study of the ramified primes relies in the following proposition:
\begin{prop}
A prime number $p$ ramifies in $K$ if and only if $p$ divides $d_K$.
\end{prop}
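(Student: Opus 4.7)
The plan is to work modulo $p$ and translate both conditions into properties of the finite $\Fp$-algebra $\bar{\Ok}:=\Ok/p\Ok$. Since $\Ok$ is free of rank $n$ over $\Z$ with basis $\alpha_1,\ldots,\alpha_n$, the classes $\bar{\alpha}_1,\ldots,\bar{\alpha}_n$ form an $\Fp$-basis of $\bar{\Ok}$, and for any $\alpha\in\Ok$ the matrix of multiplication by $\alpha$ on $\Ok$ reduces mod $p$ to that of multiplication by $\bar{\alpha}$ on $\bar{\Ok}$. In particular $\Tr(\alpha)\bmod p$ coincides with the $\Fp$-linear trace $\Tr_{\bar{\Ok}/\Fp}(\bar{\alpha})$. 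Using the second formula in the definition of $d_K$ one obtains
$$d_K \;\equiv\; \det\lp \Tr_{\bar{\Ok}/\Fp}(\bar{\alpha}_i\bar{\alpha}_j)\rp_{i,j=1}^n \pmod{p},$$
so that $p\mid d_K$ is equivalent to the $\Fp$-bilinear trace form $(\bar x,\bar y)\mapsto \Tr_{\bar{\Ok}/\Fp}(\bar x\bar y)$ on $\bar{\Ok}$ being degenerate.

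Next I would invoke the unique factorization of ideals together with the Chinese Remainder Theorem to obtain the ring isomorphism
$$\bar{\Ok}\;\cong\;\prod_{i=1}^r \Ok/\pp_i^{e_i}.$$
Ramification of $p$ amounts to some $e_i>1$, which is equivalent to one of the local factors (and hence $\bar{\Ok}$ itself) containing a nonzero nilpotent element: any nonzero class in $\pp_i/\pp_i^{e_i}$ is nilpotent, while $\Ok/\pp_i$ is a field and contains no nilpotents. The CRT isomorphism is multiplicative and respects the $\Fp$-trace, hence the trace form on $\bar{\Ok}$ splits as the direct sum of the trace forms of the factors.

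The core of the argument is then the equivalence: the trace form on $\bar{\Ok}$ is degenerate if and only if $\bar{\Ok}$ has a nonzero nilpotent. The easy direction uses that if $\bar x$ is nilpotent then $\bar x\bar y$ is nilpotent for every $\bar y$, and the trace of a nilpotent endomorphism of a finite-dimensional vector space is zero, so $\bar x$ lies in the radical of the form. Conversely, if $\bar{\Ok}$ is reduced then by the previous paragraph each $e_i=1$, so $\bar{\Ok}$ is a finite product of finite field extensions of $\Fp$; each such extension is separable because $\Fp$ is perfect, its trace form is therefore non-degenerate, and the direct sum of non-degenerate forms is non-degenerate.

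The main obstacle is the last implication, namely that the trace form of a finite separable field extension is non-degenerate. This is the classical fact that the discriminant of a separable extension is nonzero, and must be cited rather than reproved in this note. A secondary but routine verification is that the CRT isomorphism $\bar{\Ok}\cong\prod_i \Ok/\pp_i^{e_i}$ really does identify the $\Fp$-trace of a product element with the sum of the $\Fp$-traces of its components; this is checked at once by choosing an $\Fp$-basis of $\bar{\Ok}$ that is the concatenation of bases of the factors, so that the matrix of multiplication by a product element becomes block-diagonal.
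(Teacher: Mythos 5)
Your proof is correct, but note that the paper does not actually prove this proposition: it simply cites Corollary III.2.12 of Neukirch, where the criterion is obtained from the theory of the different ideal (the fact that a prime $\pp_i$ over $p$ divides $\Dk$ precisely when $p$ is ramified there, combined with the fact that the index of $\Dk$ in $\Ok$ equals $|d_K|$ --- the same circle of ideas the paper later records in its Lemma on the different). You instead give the direct, self-contained argument: reduce modulo $p$, split $\Ok/p\Ok$ via CRT, and show that $p\mid d_K$ is equivalent to degeneracy of the trace form on $\Ok/p\Ok$, which in turn is equivalent to the presence of a nonzero nilpotent, i.e.\ to some $e_i>1$. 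Your route is more elementary and needs only one imported fact, the nondegeneracy of the trace form of a finite separable field extension, which you correctly flag and which is harmless here because the residue fields are finite, hence separable over $\Fp$; the different-based route cited by the paper is heavier but yields more refined information (the exact exponent of each $\pp_i$ in $\Dk$ in the tame case, lower bounds in the wild case), which is precisely what the rest of the paper uses for the surjectivity results. One step you should make explicit: the paper defines $\Tr(\beta)=\sum_{i=1}^n\sigma_i(\beta)$ via the embeddings, so your identification of $\Tr(\alpha)\bmod p$ with the $\Fp$-trace of multiplication by $\bar{\alpha}$ on $\Ok/p\Ok$ tacitly uses that the embedding trace equals the trace of the multiplication-by-$\alpha$ endomorphism of $\Ok$ (the regular representation); this is standard, but since everything afterwards rests on that identification it deserves a sentence or a citation.
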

\begin{proof}
See Corollary III.2.12 of \cite{neukirch}.
\end{proof}
The prime numbers may ramify with different behaviours: the following distinction will be useful to provide criteria for the study of the restricted trace homomorphism.
\\
Let $p$ be a rational prime number ramifying in $K$ and let $p\Ok = \pp_1^{e_1}\cdots \pp_r^{e_r}$ be its prime ideal factorization in $\Ok$. Then $p$ is said to be \textbf{wildly ramified} if there exists $i\in\{1,\ldots,r\}$ such that $p$ divides $e_i$; otherwise $p$ is said to be \textbf{tamely ramified}.\\
A number field $K$ is said to be \textbf{tame} if every ramified prime number is tamely ramified, otherwise $K$ is said to be \textbf{wild}.
\\\\
The last tool needed is the concept of different ideal.\\
Consider the set $\Hat{\Ok}:=\{\alpha\in K\colon \Tr(\alpha\cdot\Ok)\subset\Z\}$. The set $\Dk:=\{\beta\in K\colon \beta\cdot\Hat{\Ok}\subset\Ok\}$ is called the \textbf{different ideal of $K$} (or simply the different of $K$); it is an abelian group with respect to the sum.
\begin{lem}
The different $\Dk$  satifies the following properties:
\begin{itemize}
    \item $\Dk$ is an ideal of $\Ok$;
    \item If $p$ wildly ramifies in $K$, $p\Ok = \pp_1^{e_1}\cdots\pp_r^{e_r}$ and there exists $i\in\{1,\ldots,r\}$ such that $p$ divides $e_i$, then $\pp_i$ is a factor of $\Dk$ with exponent at least $e_i$;
    \item If $p$ tamely ramifies in $K$ and  $p\Ok = \pp_1^{e_1}\cdots\pp_r^{e_r}$, then for any $i\in\{1,\ldots,r\}$ the number $e_i-1$ is the exact exponent of the prime $\pp_i$ as factor of $\Dk$;
    \item The size of the quotient ring $\Ok/\Dk$ is equal to $|d_K|$.
    \end{itemize}
\end{lem}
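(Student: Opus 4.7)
The plan is to address the four items in turn, exploiting throughout that $\Hat{\Ok}$ is the complementary module (the trace dual of $\Ok$) and that $\Dk$ is essentially its inverse as a fractional ideal.

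For the first bullet, I would first verify that $\Hat{\Ok}$ is a fractional ideal of $\Ok$: it is an $\Ok$-module because the trace is $\Q$-linear and $\Ok$ is closed under multiplication (so if $\Tr(x\Ok)\subset\Z$ and $a\in\Ok$, then $\Tr((ax)\Ok)=\Tr(x(a\Ok))\subset\Tr(x\Ok)\subset\Z$); it contains $\Ok$ because the trace sends algebraic integers to $\Z$; and it is finitely generated since, using the dual basis of a $\Z$-basis of $\Ok$ relative to the trace pairing, one shows $d_K\cdot\Hat{\Ok}\subset\Ok$. Then $\Dk=\Hat{\Ok}^{-1}$ in the sense of fractional ideals, and the inclusion $1\in\Ok\subset\Hat{\Ok}$ forces $\Dk\subset\Ok\cdot 1=\Ok$, so $\Dk$ is actually an integral ideal.

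For the second and third bullets, I would localize and complete. Since $\Dk$ is a product of prime ideals of $\Ok$, it suffices to compute $v_{\pp_i}(\Dk)$ for each $\pp_i\mid p$; and this local invariant agrees with the valuation of the different of the completed extension $K_{\pp_i}/\Q_p$, which is a totally ramified extension of degree $e_i$ over its maximal unramified subextension. Writing that totally ramified piece as $\Q_p^{\mathrm{unr}}(\pi)$ for a uniformizer $\pi$ with Eisenstein minimal polynomial $g(x)=x^{e_i}+\sum_{j<e_i}c_jx^j$, the different is generated by $g'(\pi)$. A direct valuation count gives $v_{\pp_i}(g'(\pi))\geq e_i-1$, with equality precisely when the leading term $e_i\pi^{e_i-1}$ of $g'(\pi)$ is not killed modulo higher-order terms, i.e.\ precisely when $p\nmid e_i$. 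This separates the tame case (exact exponent $e_i-1$) from the wild case (exponent at least $e_i$, because then $e_i\pi^{e_i-1}$ has $\pp_i$-valuation at least $e_i$ and the other monomials do too, by Eisenstein).

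For the fourth bullet, I would combine the norm identity $|\Ok/I|=\Nm(I)$ valid for every non-zero ideal $I\subset\Ok$ with the identity of ideals $d_K\Z=\Nm(\Dk)$ relating discriminant and different. The latter is proved by choosing a $\Z$-basis $\alpha_1,\dots,\alpha_n$ of $\Ok$ and its trace-dual basis $\alpha_1^\vee,\dots,\alpha_n^\vee$ of $\Hat{\Ok}$: the change-of-basis matrix has determinant $\pm d_K$, so $[\Hat{\Ok}:\Ok]=|d_K|$, and taking inverses of the chain $\Dk\subset\Ok\subset\Hat{\Ok}$ of fractional ideals converts this index into $\Nm(\Dk)$, whence $|\Ok/\Dk|=|d_K|$.

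The main obstacle is the second and third bullets: the local Eisenstein computation of $v_{\pp_i}(g'(\pi))$ is where the tame/wild dichotomy really enters, and one has to be careful that the localization of the global different really is the local different, which itself requires that formation of the trace-dual module commutes with localization and completion. Once that compatibility is in hand, the remaining assertions are essentially bookkeeping with fractional ideals and determinants.
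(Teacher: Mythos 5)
Your sketch cannot be matched against the paper's argument line by line, because the paper gives no argument: its ``proof'' of this lemma is a citation to Section 4.2 of Narkiewicz's book. What you propose is, in outline, exactly the standard proof that such a reference contains, and it is sound: (i) $\Hat{\Ok}$ is an $\Ok$-module containing $\Ok$, finitely generated since $d_K\cdot\Hat{\Ok}\subset\Ok$ (via the trace-dual basis), hence a fractional ideal, so its inverse $\Dk$ is an integral ideal because $1\in\Hat{\Ok}$; (ii) the exponent of $\pp_i$ in $\Dk$ is a local quantity, computed in $K_{\pp_i}/\Q_p$ from the generator $g'(\pi)$ of the local different, where $g$ is the Eisenstein minimal polynomial of a uniformizer of the totally ramified part, and the valuation count ($e_i-1$ for the term $e_i\pi^{e_i-1}$ when $p\nmid e_i$, at least $e_i$ for every term when $p\mid e_i$) gives precisely the tame/wild dichotomy of the second and third bullets; (iii) the Gram matrix of the trace form gives $[\Hat{\Ok}:\Ok]=|d_K|$, and this index equals $[\Ok:\Dk]=\Nm(\Dk)$, proving the last bullet. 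If you write this out in full, the points you yourself flag are the ones to make explicit by quoting standard lemmas: that formation of the complementary module commutes with localization and completion, that the different is multiplicative in towers and trivial for unramified extensions (this is what lets you pass to the maximal unramified subfield), and that the valuation ring of a totally ramified extension of local fields is monogenic, generated by a uniformizer, which is what makes the formula ``different $=(g'(\pi))$'' available. With those ingredients your argument is complete; the paper's approach simply buys brevity by outsourcing all of this to the reference, while yours makes the note self-contained.
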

\begin{proof}
These results are all proved in Section 4.2 of \cite{narkiewicz2013elementary}.
\end{proof}
The distinction between tame and wild number fields and the concept of different ideal have proved to be important in determining the surjectivity of the trace homomorphism restricted to the ring of integers. 
\begin{thm}\label{Tame}
Let $K$ be a tame number field. Then $\Tr:\Ok\rightarrow\Z$ is surjective.
\end{thm}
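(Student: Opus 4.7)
The plan is to convert surjectivity of $\Tr:\Ok\to\Z$ into a local question about the different $\Dk$, and then use the sharp tame formula $v_{\pp_i}(\Dk)=e_i-1$ to reach a contradiction at each rational prime $p$ one prime at a time.

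First, I would note that $\Tr$ is $\Z$-linear, so $\Tr(\Ok)$ is an ideal of $\Z$, necessarily of the form $m\Z$. Surjectivity fails exactly when some rational prime $p$ satisfies $\Tr(\Ok)\subset p\Z$. I would then rewrite this inclusion as $\Tr(\tfrac{1}{p}\Ok)\subset\Z$, which by the very definition of the codifferent means $\tfrac{1}{p}\in\Hat{\Ok}$.

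Next, using the defining formula $\Dk=\{\beta\in K : \beta\Hat{\Ok}\subset\Ok\}$ and the fact that $\Ok$ is a Dedekind domain (so nonzero fractional ideals are invertible), I would extract the identity $\Hat{\Ok}=\Dk^{-1}$ as fractional ideals of $\Ok$. This turns $\tfrac{1}{p}\in\Hat{\Ok}$ into the divisibility statement $\Dk\subset p\Ok$, so the problem reduces to showing that, for every rational prime $p$, $p\Ok$ does not divide $\Dk$.

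Finally, I would argue by contradiction: assume $p\Ok\mid\Dk$ for some $p$, factor $p\Ok=\pp_1^{e_1}\cdots\pp_r^{e_r}$, and compare $\pp_i$-adic valuations. The assumption forces $v_{\pp_i}(\Dk)\geq e_i$ for every $i$. If $p$ is unramified then $e_i=1$, but Proposition~2 combined with $|\Ok/\Dk|=|d_K|$ forces $v_{\pp_i}(\Dk)=0$ (since no power of $p$ divides $|d_K|$, while $N(\pp_i)$ is a power of $p$), a contradiction. If $p$ is ramified, then tameness of $K$ makes it tamely ramified, and the third bullet of the lemma gives $v_{\pp_i}(\Dk)=e_i-1<e_i$, again a contradiction. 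The main obstacle I anticipate is the rigorous identification $\Hat{\Ok}=\Dk^{-1}$: it is essentially built into the definition of the different but is not explicitly recorded in the lemma, so one has to check that $\Dk$ is a genuine invertible $\Ok$-ideal whose fractional-ideal inverse coincides with $\Hat{\Ok}$. Once that bridge is in place, the rest is a short valuation comparison.
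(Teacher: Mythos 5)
Your argument is correct, and it is in effect the standard proof of this theorem: the paper itself gives no argument, only a citation to Corollary 5 in Section 4.2 of Narkiewicz with the remark that the different plays the main role, and your valuation comparison at each rational prime is exactly that role, using only the facts recorded in the paper's Lemma 1 together with Proposition 2. One simplification worth noting: the obstacle you flag, the rigorous identification $\Hat{\Ok}=\Dk^{-1}$, is not needed for the implication you actually use. From $\Tr(\Ok)\subset p\Z$ you get $\tfrac{1}{p}\in\Hat{\Ok}$, and then for every $\beta\in\Dk$ the definition $\Dk=\{\beta\in K\colon \beta\cdot\Hat{\Ok}\subset\Ok\}$ gives $\beta\cdot\tfrac{1}{p}\in\Ok$ directly, i.e.\ $\Dk\subset p\Ok$; since $\Dk$ is a nonzero ideal of $\Ok$ (its index is $|d_K|$ by the fourth bullet of the lemma), containment in the Dedekind domain $\Ok$ already yields $v_{\pp_i}(\Dk)\geq e_i$ for all $i$, and your contradiction goes through unchanged: $v_{\pp_i}(\Dk)=e_i-1<e_i$ when $p$ is (tamely) ramified, and $v_{\pp_i}(\Dk)=0$ when $p$ is unramified, since otherwise $N(\pp_i)$, a positive power of $p$, would divide $|\Ok/\Dk|=|d_K|$, contradicting Proposition 2. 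So the only inputs are the quoted lemma, Proposition 2, and standard Dedekind-domain facts (containment equals divisibility, multiplicativity of the ideal norm), and your proof is complete as written, with the codifferent-inverse identification reserved for the converse direction, which the theorem does not require.
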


\begin{proof}
See Corollary 5, Section 4.2 of \cite{narkiewicz2013elementary}. The different ideal has a main role in the setting of the proof.
\end{proof}

One can get an interesting Corollary, from which the surjectivity of the restricted trace can be recovered by looking only at the factorization of the discriminant.

\begin{coro}\label{DiscCriterion}
Let $K$ be a number field with squarefree discriminant $d_K$. Then $\Tr:\Ok\rightarrow\Z$ is surjective.
\end{coro}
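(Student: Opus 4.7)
The natural strategy is to show that the squarefree hypothesis on $d_K$ forces $K$ to be tame, and then invoke Theorem~\ref{Tame}. So the plan is to argue by contrapositive: assume some prime $p$ is wildly ramified in $K$, and deduce that $p^2 \mid d_K$, contradicting squarefreeness.

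Concretely, suppose $p\Ok = \pp_1^{e_1}\cdots\pp_r^{e_r}$ with $p \mid e_i$ for some index $i$, so in particular $e_i \geq p \geq 2$. By the second bullet of the preceding lemma, $\pp_i^{e_i}$ divides the different $\Dk$. Using multiplicativity of the ideal norm and the fact that $\pp_i$ lies over $p$, one has $N(\pp_i) = p^{f_i}$ for some $f_i \geq 1$, so $N(\pp_i^{e_i}) = p^{f_i e_i}$ divides $N(\Dk)$. The fourth bullet of the lemma identifies $N(\Dk)$ with $|\Ok/\Dk| = |d_K|$, and since $f_i e_i \geq e_i \geq 2$, we obtain $p^2 \mid d_K$.

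The only subtle step is this translation from divisibility of ideals in $\Ok$ to divisibility of integers in $\Z$, which is exactly where the last bullet of the lemma (the formula $|\Ok/\Dk| = |d_K|$) does the work; once that is in hand, there is essentially no obstacle. The contrapositive conclusion is that if $d_K$ is squarefree then every ramified prime is tamely ramified, so $K$ is a tame number field, and Theorem~\ref{Tame} immediately yields the surjectivity of $\Tr: \Ok \to \Z$.
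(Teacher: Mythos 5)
Your proof is correct and follows essentially the same route as the paper: both arguments use the lemma on the different (the exponent bound $\geq e_i$ at wildly ramified primes together with $|\Ok/\Dk|=|d_K|$) to show that a squarefree discriminant forces $K$ to be tame, and then conclude by Theorem~\ref{Tame}. Your contrapositive formulation via multiplicativity of the ideal norm is in fact slightly cleaner, since it handles all primes uniformly where the paper treats the prime $2$ as a separate case.
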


\begin{proof}
If $d_K=\pm p_1\cdots p_r$ is squarefree, then $\Dk = \pp_1\cdots\pp_r$ where the size of every quotient ring $\Ok/\pp_i$ is equal to $p_i$. This implies that, for any fixed factor $p_i$ of the discriminant, $\pp_i$ is the unique factor of $p_i\Ok$ which has exponent greater than 1, and the value of this exponent is precisely equal to 2.
Thus, any odd $p_i$ is tamely ramified.\\
If $2$ divides $d_K$ and $\Q$ is the factor of $2\Ok$ dividing $\Dk$, then either 2 wildly ramifies with the exponent of $\Q$ being 1, or 2 tamely ramifies with the exponent of $\Q$ being 2, and both these options are absurd.\\
Thus $K$ is a tame number field, and from Theorem \ref{Tame} the surjectivity on the trace over the ring of integers follows.
\end{proof}

\section{A weaker discriminant criterion}
Theorem \ref{Tame} of the previous section proves the surjectivity of the restricted trace for a wide class of number fields, and it also yields a good sufficient criterion depending only on the factorization of the discriminant $d_K$.\\
The goal of this section is to present a simple, yet new, criterion for the surjectivity which not only relies on the factorization of $d_K$, but has the advantage to give a positive answer also for some wild number fields.

\begin{thm}\label{CriterioNuovo}
Let $K$ be a number field of degree $n$ and assume that, for every prime number $p$ dividing $n$, the number $p^2$ does not divide $d_K$. Then $\Tr:\Ok\rightarrow \Z$ is surjective.
\end{thm}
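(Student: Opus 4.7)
The image $\Tr(\Ok)$ is a subgroup of $\Z$, hence an ideal $m\Z$ for some nonnegative integer $m$; and since $\Tr(1)=n \in m\Z$, we have $m \mid n$. In particular every prime divisor of $m$ already divides $n$, so to force $m=1$ it suffices to establish the implication
\[
\Tr(\Ok) \subset p\Z \;\Longrightarrow\; p^{2}\mid d_K
\]
for every prime $p$: applied to any $p\mid m$, which necessarily divides $n$, it would contradict the hypothesis $p^{2}\nmid d_K$.

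To prove the implication, I would unpack the definition of the codifferent. The assumption $\Tr(\Ok)\subset p\Z$ is equivalent to $\Tr(p^{-1}\Ok)\subset \Z$, i.e.\ $p^{-1}\Ok\subset \widehat{\Ok}=\Dk^{-1}$, equivalently $\Dk\subset p\Ok$. Writing $p\Ok = \pp_1^{e_1}\cdots \pp_r^{e_r}$, this inclusion forces $v_{\pp_i}(\Dk)\geq e_i$ for every $i$. But the Lemma tells us that a tamely ramified $\pp_i$ contributes exactly $e_i - 1 < e_i$ to $\Dk$; therefore every $\pp_i$ above $p$ must be wildly ramified, so by definition $p\mid e_i$ (in particular $e_i\geq p$) for every $i$.

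The concluding step is an estimate of $v_p(d_K)$ via the identity $|\Ok/\Dk|=|d_K|$ from the last bullet of the Lemma:
\[
v_p(d_K) \;=\; \sum_{i=1}^r f_i\, v_{\pp_i}(\Dk) \;\geq\; \sum_{i=1}^r f_i e_i \;\geq\; p\sum_{i=1}^r f_i \;\geq\; p \;\geq\; 2,
\]
where $f_i$ is the residue degree of $\pp_i$. Hence $p^{2}\mid d_K$, as required.

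I do not anticipate a hard step: the whole argument is a clean combination of the codifferent characterization of the inclusion $\Tr(\Ok) \subset p\Z$ with the tame/wild dichotomy for exponents of the different provided by the Lemma. The key observation that makes the hypothesis fit perfectly is the elementary fact $m\mid n$, which automatically restricts the primes one needs to test to those dividing $n$, i.e.\ exactly the primes controlled by the assumption.
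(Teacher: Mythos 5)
Your argument is correct, but it follows a genuinely different route from the paper. The paper's proof is purely linear-algebraic: it splits $\Ok = T_0(K)\oplus\Z\gamma$ with $T_0(K)$ the kernel of the trace and $\Tr(\gamma)=t$ the positive generator of $\Tr(\Ok)$, computes $d_K$ via the embedding matrix in this adapted basis, and a column manipulation gives $d_K = t^2\cdot(\det N_K)^2$ with $(\det N_K)^2\in\Z$; combined with $t\mid n$ this forces $t=1$. You instead localize at a prime: $\Tr(\Ok)\subset p\Z$ is translated through the codifferent into $\Dk\subset p\Ok$, and then $|d_K|=|\Ok/\Dk|$ gives $v_p(d_K)=\sum_i f_i\,v_{\pp_i}(\Dk)\geq\sum_i f_i e_i = n\geq p\geq 2$. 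Both proofs are valid; yours leans on the different-ideal machinery (plus two standard facts not explicitly in the paper's Lemma: that $\widehat{\Ok}$ is a fractional ideal with inverse $\Dk$, so the inclusion reverses, and the multiplicativity of the ideal norm with $\Nme(\pp_i)=p^{f_i}$), whereas the paper's is more self-contained, needing only the definition of $d_K$ and the structure of $\Ok$ as a free abelian group. In exchange, your version gives a sharper local conclusion ($v_p(d_K)\geq n$ for every prime $p$ dividing the generator of $\Tr(\Ok)$, versus the paper's $t^2\mid d_K$), and it exposes the link with wild ramification that motivates the paper's closing conjecture. One small streamlining: the detour through wild ramification ($p\mid e_i$, hence $e_i\geq p$) is unnecessary, since $\sum_i f_i e_i = n$ already exceeds $p$ by the hypothesis $p\mid n$; it is correct, but note that the paper's Lemma states the tame/wild dichotomy for the rational prime $p$, while you apply it prime-by-prime to each $\pp_i$ (i.e.\ $v_{\pp_i}(\Dk)=e_i-1$ exactly when $p\nmid e_i$), which is the standard refinement but slightly more than what is literally quoted.
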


\begin{proof}
Let $T_0(K):=\{\alpha\in\Ok\colon \Tr(\Ok)=0\}$ be the kernel of the restricted trace homomorphism. The structure theorem of free abelian groups (Theorem 7.3, Chapter I of \cite{langAlgebra}) implies that $T_0(K)$ is a free abelian group too, its rank being equal to $n-1$.\\
The set $\Tr(\Ok)$ is an ideal in $\Z$. Let $t$ be the positive generator of this ideal. Since
$n = \Tr(1)$ one gets that $t$ divides $n$.

The previous considerations imply that the ring of integers admits a decomposition $\Ok = T_0(K)\oplus \Z\gamma$ as free abelian group, where $\gamma\in\Ok$ is such that $\Tr(\gamma)=t$. Let $\alpha_1,\ldots,\alpha_{n-1}$ be a $\Z$-basis for $T_0(K)$: then $\alpha_1,\ldots,\alpha_{n-1},\gamma$ is a $\Z$-basis for $\Ok$ and so the discriminant $d_K$ can be computed by means of this basis.\\
Let $M_K$ denote the matrix
$$ 
\left(
\begin{matrix}
 \sigma_1(\alpha_1) & \cdots & \sigma_n(\alpha_1)\\
 \cdots &\cdots &\cdots\\
 \sigma_1(\alpha_{n-1}) &\cdots &\sigma_n(\alpha_{n-1})\\
 \sigma_1(\gamma) &\cdots &\sigma_n(\gamma)
\end{matrix}
\right).$$
Since its determinant does not change by replacing the last column with the
sum of every other column, we get that
$$\det M_K = \det \left(
\begin{matrix}
 \sigma_1(\alpha_1) & \cdots & \sigma_{n-1}(\alpha_1) & \Tr(\alpha_1)\\
 \cdots &\cdots &\cdots&\cdots\\
 \sigma_1(\alpha_{n-1}) &\cdots &\sigma_{n-1}(\alpha_{n-1})&\Tr(\alpha_{n-1})\\
 \sigma_1(\gamma) &\cdots &\sigma_{n-1}(\gamma)&\Tr(\gamma)
\end{matrix}
\right)
$$
$$
=
\det \left(
\begin{matrix}
 \sigma_1(\alpha_1) & \cdots & \sigma_{n-1}(\alpha_1) & 0\\
 \cdots &\cdots &\cdots&\cdots\\
 \sigma_1(\alpha_{n-1}) &\cdots &\sigma_{n-1}(\alpha_{n-1})& 0\\
 \sigma_1(\gamma) &\cdots &\sigma_{n-1}(\gamma)&t
\end{matrix}
\right).
$$
Consider now the minor given by the first $n-1$ rows and the first $n-1$ columns: 
$$
N_K:=\left(
\begin{matrix}
 \sigma_1(\alpha_1) & \cdots & \sigma_{n-1}(\alpha_1) \\
 \cdots &\cdots &\cdots\\
 \sigma_1(\alpha_{n-1}) &\cdots &\sigma_{n-1}(\alpha_{n-1})
\end{matrix}
\right).
$$
Applying any $\sigma_i$ which is not the identity embedding on $K$, one sees that a column of $N_K$ is now formed by elements $\sigma_n(\alpha_j)$ with $j=1,\ldots,n-1$, while the other columns are permutations of the remaining columns. But for every $j\in\{1,\ldots,n-1\}$ it is $\sigma_n(\alpha_j) = -\sum_{i=1}^{n-1}\sigma_i(\alpha_j)$: this implies that $\det N_K$ is invariant for the action of the embeddings $\sigma_i$, up to a possible change of sign due to the permutation of the columns.
\\
Thus it is enough to take the square of $\det N_K$ to get an algebraic integer invariant for any embeddings $\sigma_i$, i.e. a rational integer, and so
$d_K = (\det M_K)^2 = (\det N_K)^2 \cdot t^2 = C\cdot t^2$, with $C\in\Z$. In other words, it is $d_K/t^2\in\Z$.\\
Finally, from the above considerations and the fact that $t$ divides $n$, the hypothesis of the theorem force $t=1$, and so the trace $\Tr:\Ok\rightarrow\Z$ must be surjective.
\end{proof}

An example of wild number field for which the surjectivity of the restricted trace is not evident without Theorem \ref{CriterioNuovo} is given by the cubic field $K$ defined by the polynomial $x^3+x-6$. In fact, its discriminant is equal to $-2^2\cdot 61$, so the primes 61 and 2
both ramify. A computation with the computer algebra package PARI/GP \cite{pari} shows
that the prime 2 wildly ramifies, thus the extension is not tame and Theorem \ref{Tame} or
Corollary \ref{DiscCriterion} do not apply. However, $\Tr : \Ok \to \Z$ is surjective, by Theorem \ref{CriterioNuovo}.\\\\
Some final considerations arise looking back at the quadratic fields studied in Section \ref{SectionSurjectivity}: in fact, these are wild fields for which the trace on the ring of integers is not surjective. Moreover, their discriminant is always divided by $4=2^2$, and thus they do not satisfy the hypotheses needed for the sufficient criterion introduced by Theorem \ref{CriterioNuovo}.\\
This suggests a possible conjecture for the complete characterization of the surjectivity of the trace map on the ring of integers: \\
\textit{Given a number field $K$, then $\Tr:\Ok\rightarrow\Z$ is not surjective if and only if $K$ is wild and does not satisfy the hypotheses of Theorem \ref{CriterioNuovo}.}

\end{document}